\definecolor{rulecolor}{RGB}{0,71,171}
\definecolor{tableheadcolor}{gray}{0.92}
\newcommand{\topline}{ %
        \arrayrulecolor{rulecolor}\specialrule{0.1em}{\abovetopsep}{0pt}%
        \arrayrulecolor{tableheadcolor}\specialrule{\belowrulesep}{0pt}{0pt}%
        \arrayrulecolor{rulecolor}}
\newcommand{\midtopline}{ %
        \arrayrulecolor{tableheadcolor}\specialrule{\aboverulesep}{0pt}{0pt}%
        \arrayrulecolor{rulecolor}\specialrule{\lightrulewidth}{0pt}{0pt}%
        \arrayrulecolor{white}\specialrule{\belowrulesep}{0pt}{0pt}%
        \arrayrulecolor{rulecolor}}
\newcommand{\bottomline}{ %
        \arrayrulecolor{white}\specialrule{\aboverulesep}{0pt}{0pt}%
        \arrayrulecolor{rulecolor} %
        \specialrule{\heavyrulewidth}{0pt}{\belowbottomsep}}%
  \newcommand{\myrowcolour}{\rowcolor{white}}
\definecolor{Gray}{gray}{0.925}
\setlist[tablenotes]{label=\tnote{\alph*},ref=\alph*,itemsep=\z@,topsep=\z@skip,partopsep=\z@skip,parsep=\z@,itemindent=\z@,labelindent=\tabcolsep,labelsep=.2em,leftmargin=*,align=left,before={\footnotesize}}
 \newtheorem{thm}{Theorem}[section]
\newtheorem{defn}[thm]{Definition}
 \newtheorem{rem}[thm]{Remark}
 \numberwithin{equation}{section}
\begin{document}

\begin{frontmatter}

\cortext[cor1]{Corresponding author. Tel: +16626940182 \\ \indent\ \ \ E-mail addresses: 
dd1424@msstate.edu (D. Damircheli),
smm.kazemi@khu.ac.ir (S.-M.-M. Kazemi), bastani@iasbs.ac.ir (A. Foroush Bastani).}

\title{ On Meshfree Collocation to Compute the Probability of Default under a Regime-Switching Synchronous-Jump Tempered Stable L\'{e}vy Model}

\author[label1]{Davood Damircheli}
\author[label1]{{Mohsen Razzaghi}\corref{cor1}}
\author[label2]{{Seyed-Mohammad-Mahdi Kazemi}}
\author[label3]{{Ali Foroush Bastani}}
\address[label1]{Department and Organization, Mississippi State University,\\ P.O. Box 39759, Starkville, USA}
\address[label2]{Department of Financial Mathematics, Faculty of Financial Sciences, Kharazmi University,\\ P.O. Box 15936-56311, Tehran, Iran}
\address[label3]{Department of Mathematics, Institute for Advanced Studies in Basic Sciences,\\
P.O. Box 45195-1159, Zanjan, Iran}
\vspace{0 cm}

\begin{abstract}
In the paper [Hainaut, D. and Colwell, D.B., {\rm A structural model for credit risk with switching processes and synchronous jumps}, The European Journal of Finance 22(11) (2016): 1040-1062], the authors exploit a synchronous-jump regime-switching model to compute the default probability of a publicly traded company. Here, we first generalize the proposed L\'{e}vy model to more general setting of tempered stable processes recently introduced into the finance literature. Based on the singularity of the resulting partial integro-differential operator, we propose a general framework based on strictly positive-definite functions to de-singularize the operator. We then analyze an efficient meshfree collocation method based on radial basis functions to approximate the solution of the corresponding system of partial integro-differential equations arising from the structural credit risk model. We show that under some regularity assumptions, our proposed method naturally de-sinularizes the problem in the tempered stable case. Numerical results of applying the method on some standard examples from the literature confirms the accuracy of our theoretical results and numerical algorithm.

\end{abstract}

\begin{keyword}
Probability of Default; Structural Credit Risk Model; Strictly Positive-Definite Functions; Radial Basis Function Collocation; Regime Switching; Tempered Stable L\'{e}vy Process.
\end{keyword}
\end{frontmatter}

\section{Introduction}
In recent years, credit risk models have become an indispensable tool for regulators to assess the performance of financial institutions, specially banking and credit providing entities. Motivated by the proposals in Basel accords\footnote{\url{https://www.bis.org/bcbs/publ/d424.htm}}, accurate credit risk models are now essential tools in predicting the default risk of loan portfolios, evaluating the vulnerability of lender institutions and estimating reliable economic capital levels for banks to remain solvent at a given confidence level and time horizon \cite{mcneil2015quantitative}. They also provide useful tools for market participants to identify, measure,
monitor and control their exposure to credit risk \cite{fong2006credit}.

The two primary approaches in the literature to model the default risk of debt obligations and to price credit risky securities are structural and reduced-form (a.k.a intensity or statistical) models \cite{duffie2012credit}. While the ``reduced-form'' approach, abstracts away from the economic notion of bankruptcy and treats the default event as an exogenous occurrence governed by a specific jump-diffusion process, the structural models use the evolution of firms’ structural variables, such as asset and debt values, to determine the time of default. These last models are based on capital structure theory of Modigliani and Miller \cite{modigliani1958cost} and option pricing theory of Black and Scholes \cite{black1973pricing} and Merton \cite{merton1974pricing} and rely heavily on diffusion processes to model the evolution of the firm's asset value process \cite{sundaresan2013review}.

There have been some efforts in the literature to extend the Merton's structural model to more complex dynamical processes such as jump-diffusion \cite{fiorani2010single} or regime-switching diffusion processes \cite{siu2008pricing}. 
The family of Markov-modulated regime switching processes based on continuous-time Markov chains has been extensively used and analyzed by academics and market practitioners to model the variable economic conditions observed frequently in finance and economics (see e.g. \cite{Elliott,Hainaut} and the references therein). 

In the case that we are confronted with a sudden synchronous jump in the asset value process alongside the regime shifts, we obtain a process recently introduced into the field of asset pricing (see Chourdakis \cite{chourdakis2005switching} for a complete account) with some option pricing applications (see e.g. \cite{bastani2013radial,lee2014financial,rambeerich2016high}). It is shown empirically that this regime-switching model and its extensions could successfully be calibrated to a wide range of asymmetric volatility profiles (see e.g. \cite{assonken2017modeling}).

In a recent contribution, Hainaut and Colwell \cite{hainaut2016structural} studied a synchronous jump regime-switching version of Merton's structural credit risk model to estimate the probability of default and price corporate bonds. 
Based on the fact that there is no closed-form solution for default probabilities and bond prices in this setting, they developed a numerical approximation scheme based on the Fourier space time-stepping method (see e.g. \cite{jackson2007option,jackson2008fourier}) which is only applicable for L\'{e}vy processes with known characteristic functions. 

In this paper, we extend the family of L{\'e}vy processes utilized in Hainaut and Colwell \cite{hainaut2016structural} by considering the family of tempered stable distributions which capture the behaviour of firm's asset value process more accurately. Tempered stable distributions and their associated processes are a class of models that have attracted the attention of many researchers from applied probability and stochastic analysis to physics and financial mathematics. They were introduced in \cite{sato1999levy}, where the associated L\'evy processes are called ``truncated L\'evy flights'' and have been generalized by several authors \cite{kuchler2013tempered}. Tempered stable distributions form a six parameter family of infinitely divisible distributions, which cover several well-known sub-classes such as Variance Gamma (VG) distributions  \cite{carr2002fine,madan1990variance}, bilateral Gamma distributions \cite{kuchler2013tempered} and CGMY distributions \cite{sundaresan2013review}. Properties of tempered stable distributions have been investigated, e.g., in \cite{madan2001purely,lee2016comparison,revuz2013continuous,bianchi2010tempered}. For financial modelling, they have been applied, e.g. in  \cite{siu2008pricing,rachev2011financial,tankov2003financial,duffie2012credit}, see also the recent book \cite{kuchler2008bilateral}.

Based on the fact that the system of partial integro-differential equations (PIDEs) arising from these models contains singular integral terms arising from the structure of their probability density functions are difficult to treat numerically and these singularities adversely affect the accuracy and convergence rate of the standard numerical methods (see e.g. \cite{bastani2013radial, almendral2007accurate, lee2016comparison, hirsa2012computational, cont2005finite, brummelhuis2014radial, itkin2017pricing, damircheli2019solution}), we need to employ special de-singularization techniques to overcome these difficulties with minimum extra computational cost. Some research studies have been done to address this issue in the literature. Cont and his coworkers \cite{la2003integro, cont2005finite,  tankov2003financial} have approximated the above-mentioned PIDEs with a new singularly perturbed PIDE($\epsilon$) where small jumps near the origin are estimated by an appropriate Brownian motion process. Exploiting the Fourier transform technique is another alternative employed for PIDEs with and without singularities \cite{surkov2009option}. However, another approach which is inspired form physical intuition is to convert the original PIDEs into semi-parabolic PDEs by eliminating the integral terms and applying a modern finite difference method to solve the derived problem \cite{itkin2017pricing}. Last but not least, using a stable method to approximate the solution of the PIDEs such that  the numerical method inherently handles the singularity is similar to the method proposed in this paper.

In recent years, considerable attention has been centered around the development of new numerical schemes under the heading of mesh-free or meshless methods. They constitute powerful tools in the field of numerical analysis with a wide range of applications from image processing to machine learning \cite{fasshauer2007meshfree}. Among these meshless methods, those which are based on expansion in terms of radial basis functions (RBFs) provide a simple and efficient framework to implement the meshfree idea and offer promising results in real world applications. As a method to solve partial (integro-) differential equations arising from a wide variety of applications, RBF-based methods are used both in Galerkin and collocation modes with good accuracy and stability properties (see e.g.  \cite{kormann2013galerkin,sarra2008numerical} and references therein).  

In this paper, we propose and analyze an RBF-based collocation scheme based on global multiquadratic basis functions with inherent capability to desingularize the integral terms in a  straightforward manner. Our contribution could be considered as an extension of the method presented in Brummelhuis and Chan \cite{brummelhuis2014radial} to the more general setting of tempered stable L\'evy processes where in addition, we have provided a firm theoretical basis on the working of the proposed method. In order to validate the accuracy and efficiency of this method, we consider the default probability estimation for three different firms previously studied in \cite{hainaut2016structural} now based on a tempered stable L\'evy distribution as the firm's asset value process. Our findings confirm the capabilities of the new method as a reliable and accurate scheme to handle the problem complexities. 

The outline of the article is as follows. In Section 2, we introduce the basic ingredients of our proposed model for the asset value process including a brief account of the hidden Markov processes underlying the regime change behaviour of the market and encoding the information about changing economic conditions. We then provide some general comments about structural models to compute the default probability of a zero coupon bond. Finally, we describe the details of the underlying Markov-modulated switching L\'evy processes with synchronous jumps to model the asset value process of the ﬁrm under the risk-neutral measure. 
In Section 3, we review the structural model for corporate debts and the features of the firm’s asset value process modeled as an exponential switching Lévy process. Among the numerous Lévy processes Next, the system of partial integro-differential equations (PIDEs) driving the probabilities of default is built. Sections 4 present the numerical method based on the proposed radial basis function collocation. In Section 5, some numerical experiments are conducted to illustrate the efficiency and accuracy of the proposed method. We conclude the paper by pointing out some future research directions.








\section{Proposed Model for the Value of Firm's Assets}
In order to model the firm’s asset value process, we employ a Markov-modulated exponential L\'{e}vy process having synchronous jumps with regime shifts. Before presenting the details of this process, we first remind the reader of some general properties of continuous-time Markov processes. In this respect, we assume that the economic state at time $t$ is modeled by a continuous-time hidden Markov process, $\alpha_t$, with values from the set $\mathcal{H} = \{1,2,\cdots, H\}$, each representing an economic state or regime. 
Let $Q = (q_{ij})_{H\times H}$ denote the generator matrix of $\alpha_t$ where the off-diagonal entries are non-negative and $\sum_{j=1}^{H}q_{ij} = 0,$ for all $i\in \mathcal{H}$ (see e.g. \cite{Markovchain}). Discretizing the time variable with a sufficiently small step-size, $\Delta t$, we could show that $q_{i,j}\Delta t$ is the probability to switch from state $i$ to state $j$, for $i\neq j$. We also define the transition probability matrix of the process, denoted by $P(t,s)$ in the form
\begin{equation}\label{transition_matrix}
    P(t,s) = {\rm e}^{Q(s-t)},\quad s\geq t.
\end{equation}
We denote the elements of $ P(t,s)$ by $p_{ij}(t,s)$ and interpret them as being probabilities of switching from state $i$ at time $t$, to state $j$ at time $s$.
\subsection{Switching L\'{e}vy Processes with Synchronous Jumps}\label{subsection_Switching}
In the remainder, we consider an exponential L\'evy model for the asset value process of the form 
\begin{equation*}
    V_t = V_0e^{X_t},
\end{equation*}
in which $X_t$ is a Markov-modulated Lévy process\footnote{A L\'evy process is a stochastic process with stationary and independent increments which is continuous in probability (see e.g. Papapantoleon \cite{papantaleon2000introduction}).} with $X_0=0$. More precisely, $X_t$ is a piecewise L\'evy process depending upon the state, $\alpha_t$ of an economy with simultaneous jumps at regime switching times. Dynamics of the process $X_t$ could be written as 
\begin{equation}
    dX_t=dX^{\alpha_t}_t+J_{\alpha_t^{-},\alpha_t},
\end{equation}
where $X_t$ has jumps represented by $J_{\alpha_t^{-},\alpha_t}$ when regime switches at time $t$. The processes $X^j_t$, $j = 1,\cdots H$ are independent L\'evy processes with a L\'evy-It\^o decomposition of the form
\begin{equation}
\mathrm{d} X_{t}^{j}=\mu_{j} \mathrm{d} t+\sigma_{j} \mathrm{d} W_{t}^{j}+\int_{|z|>1} z J_{X^{j}}(\mathrm{d} t, \mathrm{d} z)+\int_{|z| \leq 1} z\left(J_{X^{j}}(\mathrm{d} t, \mathrm{d} z)-\nu(j, \mathrm{d} z) \mathrm{d} t\right),
\end{equation}
in which $W^j_{t}$ is a standard Brownian motion on the underlying probability space and $J_{X^j}(t,z)$ is a jump process of intensity $\nu(j,\cdot)$ (different from $J_{\alpha_t^{-},\alpha_t}$), the L\'{e}vy measure of $X_t$ in state $j$. The triplet $(\mu_j,\sigma_j,\nu(j,z))$ uniquely determines the characteristic function of $X^j_t$ by the L\'{e}vy-Khintchine theorem (see e.g. \cite{tankov2003financial, sato1999levy}) stating that 
\begin{equation}
    {\mathbb E}\left({\mathrm e}^{{\mathrm i} \omega X_{t}^j}\right)={\mathrm e}^{t \Psi_j(\omega)},
\end{equation}
in which the characteristic exponent of the L{\'e}vy process is given by
\begin{equation}\label{levy_exponent}
   \Psi_j(w)={\mathrm i} \mu_j \omega-\frac{\omega^{2} \sigma_j^2}{2}+\int_{ \mathbb{R}}\left({\mathrm e}^{{\omega} z}-1-{\mathrm i} \omega z \mathbbm{1}_{|z|<1}\right) \nu(j,d z). 
\end{equation}
In the above expressions, $\mu_{j}$ is the drift, $\sigma_{j}>0$ is the diffusion and $\nu(j,\cdot)$ is the L{\'e}vy measure on $\mathbb{R}$ satisfying  $\nu(j,\{0\})=0$ and 
$\int_{\mathbb{R}}(1 \wedge |z|^2) \nu(j,dz) < \infty$ (see e.g. \cite{sato1999levy} for more details on L\'evy processes). 


The finite-activity L\'evy density of synchronous jumps $J_{i,j}$ is given by (see \cite{hainaut2016structural})
\begin{equation}
 \mu (i,j,z) = \left\{ {\begin{array}{*{20}{c}}
{{\eta _{i,j}}{{\rm{e}}^{ - {\eta _{i,j}}\left| z \right|}},}&{z \in {\mathbb{R}^{{\rm{sign}}({J_{i,j}})}},}\\
{0,}&{{\rm{otherwise,}}}
\end{array}} \right.
\end{equation}
which can take positive, negative or real values with a characteristic function of the form
\begin{equation}
\begin{aligned}
{\Theta ^{ij}}(u) &=\mathbb{E}\left(\mathrm{e}^{\mathrm{i} u J_{i, j}}\right) 
=\int_{-\infty}^{+\infty} \mathrm{e}^{\mathrm{i} u z} \mu(i, j, z) \mathrm{d} z = \frac{{{\eta _{i,j}}}}{{{\eta _{i,j}} - {\rm{i}}u{\rm{sign}}({J_{i,j}})}}.
\end{aligned}
\end{equation}
In the next subsection, we introduce the specific L\'evy process used in this paper along with its L\'evy measure.
\subsection{Generalized Tempered Stable L\'evy Processes}
Consider a L{\'e}vy process, $X^j_t$, specified by it's characteristic triplet, $(\mu_j,\sigma_j,\nu(j,z))$. The process is called of finite activity when $\nu(j,\mathbb{R})=\int_{\mathbb{R}} \nu(j,dz) < \infty$ and of infinite activity otherwise (see e.g. \cite{tankov2003financial}). We know also that a L{\'e}vy process can be decomposed in the form $X_t^j = \mu_j t + \sigma_j W^j_t + J^j_X$ where $J^j_X$ describes a jump process that its jumps can be finite or infinite.

Let the family of functions, $\bf{W}$, to contain those $w: (0, \infty) \rightarrow [0, \infty)$ which are continuous and also satisfy the following conditions:
\begin{itemize}
    \item They are decreasing, 
    \item $w (0^+) = 1$, 
    \item $\mathop {\lim }\limits_{x \to \infty } {z^n}w(z) = 0$, \quad $\forall n \in \mathbb{N}$.
\end{itemize}        
        
\begin{defn}\label{definition_tempered_function}
A L\'evy process is of generalized tempered-stable (GTS) type if its L\'evy measure, $\nu(j,\cdot)$, could be expressed as
\begin{equation}\label{density_GTS}
 {\nu}(j,dz): = \left( {\frac{{{C^j_ + }w({\beta^j_ + }z)}}{{{z^{{1} + {\alpha^j _ + }}}}}{\mathbbm{1}_{z > 0}}(z) + \frac{{{C^j_ - }w({\beta^j _ - }\left| z \right|)}}{{{{\left| z \right|}^{1 + {\alpha^j _ - }}}}}{\mathbbm{1}_{z < 0}}(z)} \right) dz,   
\end{equation}
in which $w \in \bf{W}$ and the parameters $\alpha^j_{\pm} <2$ and $C^j_{\pm }, \beta^j_{\pm} > 0$ are chosen such that $|z|^n {C_{\pm}^j }w({\beta^j_{\pm} }z)$ is bounded on $\mathbb{R}$ for each $n \in {\mathbb{N}}$. 
\end{defn}
\begin{rem}\label{finite_density}
It could be shown that for $\alpha^j_{\pm} < 2$, we have
\begin{equation}\label{intb}
\int_{|z|>1} |z|^n \nu(j,dz) < \infty,\quad \forall \ n \geq 0. 
\end{equation}
Indeed, the relation (\ref{intb}) is a necessary and sufficient condition for moments of a GTS L{\'e}vy process to exist \cite{tankov2003financial}. 
\end{rem}
\begin{table}[ht]
\centering
\caption{Classification of L{\'e}vy processes based on their L{\'e}vy–Khintchin representation. In this table, we introduce the parameters $\tilde{\mu_j}=\mu_j-\int_{|z|<1}z\nu(j,d z)$ and $\hat{\mu}_j = \tilde{\mu}_j - \int_{\mathbb{R}} \nu(j,d z)$  (For details see \cite{sato1999levy, tankov2003financial, kuchler2013tempered, brummelhuis2014radial}).}.
\label{change-levy-khintchin}
\resizebox{\linewidth}{!}{
\begin{threeparttable}
     \begin{tabular}{c|c||c|c|c|c}
     \myrowcolour 
     \multicolumn{6}{c}{\textbf{Different types of L{\'e}vy jump-diffusion processes and their L{\'e}vy–Khintchin representation}} \\ 
\hline 
     \begin{tabular}[c]{@{}c@{}} Activity\\ of $X^j_t$ \end{tabular} & \begin{tabular}[c]{@{}c@{}} Variation\\ of $J^j_X$ \end{tabular}   & Characteristic exponent $\Psi_j(w)$ & \begin{tabular}[c]{@{}c@{}} Parameters\\ for GTS\tnotex{tnote:robots-r1} \end{tabular} & Financial Models & \begin{tabular}[c]{@{}c@{}} Financial \\ Refs. \end{tabular} \\ 
\hline \hline
      Finite  & Finite\tnotex{tnote:robots-r2}  & \cellcolor{white} ${\mathrm{i}} \hat{\mu}_j w +\dfrac{ \sigma_j^{2} w^{2}}{2}+\int_{\mathbb{R}}e^{{\mathrm{i}} \omega z} \nu(j,d z)$     &  \cellcolor{white} $\alpha^j_{\pm} < 0$ & \cellcolor{white} \begin{tabular}[c]{@{}c@{}} Merton\\ and \\ Kou\end{tabular}   & \cellcolor{white} \begin{tabular}[c]{@{}c@{}} \cite{merton1976option}\\ and \\ \cite{kou2002jump}\end{tabular} \\ \cline{1-6} 
    \multirow{-1}{*}{\rotatebox[origin=c]{90}{Infinite\ \ }} &   Finite    & \cellcolor{white}  ${\mathrm{i}} {\tilde{\mu}_j} w+\dfrac{{\sigma_j}^{2} w^{2}}{2}+\int_{\mathbb{R}}\left(e^{{\mathrm{i}} \omega z}-1\right) \nu(j,d z)$   &     \cellcolor{white}  $\alpha^j_{\pm} \in [0,1)$  & \cellcolor{white} \cellcolor{white}\begin{tabular}[c]{@{}c@{}}VG\\ and \\ CGMY\end{tabular} & \cellcolor{white} \begin{tabular}[c]{@{}c@{}} \cite{madan1990variance}\\ and \\ \cite{carr2002fine}\end{tabular} \\ \cline{2-6}
    &  Infinite  &   \cellcolor{white} ${\mathrm{i}} \mu_j w + \dfrac{\sigma^{2} w^{2}}{2}+\int_{\mathbb{R}}\left(e^{{\mathrm{i}} \omega z}-1-{\mathrm{i}}wz\mathbbm{1}_{|z|\leq1}\right) \nu(j,d z)$  &\cellcolor{white} $\alpha^j_{\pm} \in [1,2)$  & \cellcolor{white} \begin{tabular}[c]{@{}c@{}}\ \\ CGMY \\ \ \end{tabular}  & \cellcolor{white} \cite{carr2002fine} \\ 
\end{tabular}
    \begin{tablenotes}
      \item\label{tnote:robots-r1} Note that $M_{GTS}$ is not a L{\'e}vy density when $\alpha^j_+\geq 2$ or $\alpha^j_- \geq 2$.
      \item\label{tnote:robots-r2} If a L{\'e}vy jump-diffusion process is of finite activity, then it has also jumps of finite variation.
    \end{tablenotes}
\end{threeparttable}
}
\end{table}
\begin{rem}
The following particular cases are known in the literature:
\begin{itemize}
 \item $C^j_+ = C^j_-$ and $\alpha^j_+=\alpha^j_{-} = 0$ is a Variance Gamma (VG) distribution, see \cite{madan2001purely, madan1990variance};
      \item $C^j_+ = C^j_-$ and $\alpha^j_+=\alpha^j_{-}$ is a CGMY-distribution, see \cite{carr2002fine}, also called classical tempered stable distribution in \cite{rachev2011financial};
      \item $\alpha^j_+=\alpha^j_{-}$ is a KoBol distribution, see \cite{boyarchenko2000option};
      \item  $\alpha^j_+ = \alpha^j_-$ and $\beta^j_+=\beta^j_{-}$ is the infinitely divisible distribution associated to a truncated {L\'e}vy flight, see \cite{koponen1995analytic};
    \item $\alpha^j_+=\alpha^j_{-} = 0$ is a bilateral Gamma distribution, see \cite{kuchler2008bilateral}.
\end{itemize}   
\end{rem}
\subsection{The Structural Model of Default Risk}
Let us consider a firm whose capital structure consists of debt and equity where the debt component is issued as a single zero-coupon bond, $B$ with principal value $L$. We also assume that the firm can only default at the maturity of debt, $T$. Assume also a risk-neutral probability space $(\Omega,\mathcal{F},\mathbb{Q})$ equipped with some filtration, $\mathcal{F}_t$. On this filtration is
defined the firm’s value process, $V_t$, that represents the total value of the firm’s assets. The probability measure $\mathbb{Q}$ is taken to be a risk neutral measure but the following results are
also valid under the real historical measure, $\mathbb{P}$.

We define the default event to occur only if the total value of the firm’s assets at the maturity is not sufficient to cover the loan's redemption:
\begin{equation}
\textmd{Default Event} = \{V_T < L\},
\end{equation}
and the probability of default is denoted by
\begin{equation}
D(t, T) = \mathbb{P}(V_T \leq L |{\cal F}_t).
\end{equation}
In case of bankruptcy, a fraction $R(\alpha_T)$, called the recovery rate, of asset value is repaid to debt-holders. 

In this work, as in Guo et al. \cite{guo2009modeling}, the debt recovers at a different magnitude
depending here upon the economic regime, $\alpha(t)$. If $r$ is the constant risk-free rate of interest, then the time $t$ price of a defaultable bond issued by the firm will be obtained as 
\begin{equation}
    B(t, T) = {\mathbb{E}}({\rm e}^{-r(T-t)}(R(\alpha_T)V_T \mathbbm{1}_{V_T<L} + L\mathbbm{1}_{VT\geq L}) |{\mathcal{F}}_t).
\end{equation}
We will present the approach used to compute the probabilities of default and the defaultable bond prices in the reminder.

\section{Derivation of the PIDEs for Default Probabilities}

In order to derive the PIDEs corresponding to the default probability of a given firm, there exists two main approaches which are usually exploited and give equal results:
\begin{enumerate}
    \item Using It\^o’s formula for general semi-martingales (or infinitesimal generator) \cite{tankov2003financial};
    \item Using the L{\'e}vy–Khintchine formula and the tempered Fourier transform (see Proposition (1.9) of \cite{revuz2013continuous}).
\end{enumerate}
The second approach is more useful in characterizing some intrinsic properties of the integro-differential operator of the problem which will be needed in later parts of the paper.  

The probability of default of a firm in state $\alpha_t$ and time $t$ under the measure $\mathbb{P}$ is defined as 
\begin{equation}
\begin{aligned}
D(t, T,X_t, {\alpha_t}) &=\mathbb{P}\left(V_{T} \leq L | \mathcal{F}_{t}\right)
=\mathbb{E}\left(\mathbbm{1}_{V_{T} \leq L} | \mathcal{F}_{t}\right),
\end{aligned}
\end{equation}
in which $D\left(t, T,x,j\right)$ is the probability of default at regime $j$ when the state process $X_t$ takes the value $x$. It could be shown (see e.g. Hainaut and Colwell \cite{Hainaut}) that $D\left(t, T, x,j\right)$ for $j=1,2,\cdots,H$ satisfy the following system of partial integro-differential equations 
\begin{equation}\label{PIDE1}
\left\{\begin{array}{ll}
\dfrac{\partial}{\partial t} D(t,T, x,j)+\mathcal{L} D(t,T, x,j)=0, \hspace{2cm} j=1 \ldots H,\\
\\
D(t, T, x,j)=\mathbbm{1}_{x<\ln \left(L / V_{0}\right)}, \hspace{3.35cm} j=1 \ldots H,\\
\\
\lim_{x \rightarrow -\infty}D(t,T,x,j) = 1,\hspace{3.3cm} j=1 \ldots H,\\
\\
\lim_{x \rightarrow +\infty}D(t,T,x,j) = 0,\hspace{3.3cm} j=1 \ldots H,
\end{array}\right.
\end{equation}
in which  $\mathcal{L}$  is the infinitesimal generator of default probabilities of the form (see e.g. \cite{hainaut2016structural,tankov2003financial})
\begin{equation}
\begin{aligned}
\mathcal{L} D(t,T,x,j)=& \mu_{j} \frac{\partial}{\partial x} D(t,T,x,j)+\frac{\sigma_{j}^{2}}{2} \frac{\partial}{\partial x^{2}} D(t,T,x,j) \\
&+\sum_{k \neq j} q_{j, k} \int_{\mathbb{R} \backslash\{0\}} D(t,T, x+z,k)-D(t,T,x,j) \mu(j, k, z) \mathrm{d} z \\
&+\int_{\mathbb{R} \backslash\{0\}} \left(D(t,T, x+z,j)-D(t,T,x,j)-z \mathbbm{1}_{|z| \leq 1} \frac{\partial}{\partial x} D(t,T,x,j)\right) v(j, \mathrm{d} z).
\end{aligned}
\end{equation}
Due to the relation $\sum_{i=1}^{H}q_{j,i} = 0$, the generator can be written as 
\begin{equation}\label{operator1}
\begin{aligned}
\mathcal{L} D(t,T,x,j)=& \mu_{j} \frac{\partial}{\partial x} D(t,T,x,j)+\frac{\sigma_{j}^{2}}{2} \frac{\partial}{\partial x^{2}} D(t,T,x,j) \\
&+q_{j,j}D(t,T,x,j)+\sum_{k \neq j} q_{j, k} \int_{\mathbb{R} \backslash\{0\}} D(t, T,x+z,k) \mu(j, k, z) \mathrm{d} z \\
&+\int_{\mathbb{R} \backslash\{0\}} \left(D(t,T, x+z,j)-D(t,T,x,j)-z \mathbbm{1}_{|z| \leq 1} \frac{\partial}{\partial x} D(t,T,x,j)\right) v(j, \mathrm{d} z).
\end{aligned}
\end{equation}

In order to make the presentation and implementation of the numerical meshfree scheme in the next section simple, we transform the PIDEs (\ref{PIDE1}) forward in time. To this end and using a common change of variables of the form $t = T - \tau$
and thus $u(\tau,x,j) = D(t,T,x,j)$ where $T$ is removed from $u$ for simplicity of notation, the problem (\ref{PIDE1}) will be transformed into
\begin{equation}\label{foreward-PIDE1}
\left\{\begin{array}{ll}
\dfrac{\partial}{\partial \tau} u(\tau,x,j)=\mathcal{L} u(\tau,x,j), \hspace{2.2cm} j=1 \ldots H,\\
\\
u(0, x,j)=\mathbbm{1}_{x<\ln \left(L / V_{0}\right)}, \hspace{2.6cm} j=1 \ldots H,\\
\\
\lim_{x \rightarrow -\infty}u(\tau,x,j) = 1,\hspace{2.5cm} j=1 \ldots H,\\
\\
\lim_{x \rightarrow +\infty}u(\tau,x,j) = 0,\hspace{2.5cm} j=1 \ldots H.
\end{array}\right.
\end{equation}

Developing a stable numerical scheme to solve Eq. (\ref{PIDE1}) is a necessity as the analytical solution of the PIDE system is not available in closed form.
In the sequel, we will propose our approach to approximate the solution of these systems of partial integro-differential equations. 


\section{De-Singularized Meshfree Approximation}\label{de-singular}
The important point to notice in confronting with the infinite activity processes is the singularity of the L{\'e}vy measure $\nu(j,dz)$ near the origin due to small jumps. This L\'evy measure for different processes behaves differently near zero. For instance, the L\'evy measure for the CGMY process (or equally the GTS process with parameters $C^j_{+}=C^j_-$ and $\alpha^j_{\pm} > 0$) approaches infinity much faster than the variance gamma (VG) process \cite{carr2002fine}. For this reason we should have the highest possible order of approximation for the integrand near zero.

Indeed, as we move in Table \ref{change-levy-khintchin} from top rows to bottom, more complex L\'evy processes appear due to the limitations which appear in separating the three integrands (see e.g. the third column in this Table). So we will need more complex numerical algorithms which will treat the singular behaviour more efficiently (see e.g. \cite{bastani2013radial, almendral2007accurate, lee2016comparison, hirsa2012computational, cont2005finite, brummelhuis2014radial, itkin2017pricing}).  

Based on the above discussion, various numerical algorithms (mainly based on finite differences) are proposed based on different kinds of integral terms appearing in PIDEs. Some researchers in the field have tried to offer comprehensive algorithms to tackle different L\'evy densities both with or without singularity. Following is the list of outstanding works in this subject:
\begin{itemize}
    \item Approximating small jumps near the origin using an appropriate approximated Brownian motion which will result in a new PIDE. Since this PIDE is a function of $\epsilon$, PIDE($\epsilon$) is used to refer to this derived PIDE. When $\epsilon$ goes to $0$, it is proved that the solution of PIDE($\epsilon$) will converge to the solution of original PIDE (\cite{la2003integro, cont2005finite,  tankov2003financial}). Applying this approach to the system (\ref{PIDE1}) will lead us to the following PIDE($\epsilon$) 
    \begin{multline}\frac{\partial}{\partial \tau} u+\left(\mu_{j}-\omega_{j}(\varepsilon)\right) \frac{\partial}{\partial x} u+\left(\frac{1}{2}\left(\sigma_{j}^{2}+\sigma_{j}(\varepsilon)\right) \frac{\partial^2 u}{\partial x^{2}}+q_{j, j} u\right) \\
    +\sum_{k \neq j} q_{j, k} \int_{{\mathbb{R} /\{0\}}} u(x+z) \mu(j, k, d z)
    +\int_{|z|>\varepsilon}(u(\tau,j,x+z)-u(\tau,j,x)) \nu(j, d z)=0,
\end{multline}
where the factors $\omega_{j}(\varepsilon)$ and $\sigma_{j}(\varepsilon)$ have been introduced in  \cite{tankov2003financial} (Chapter 12 ).
\item An alternative methodology to handle such  singularities is the famous Fourier Transform technique in which the PIDE (with or without singularity) is transformed into a system of ODEs for which efficient numerical approximation algorithms exist. This method is usually used to benchmark the financial problems (for more details, we refer the reader to \cite{surkov2009option}).
   \item Last but not least, we will mention the method used by Itkin et al. \cite{itkin2017pricing} in which by eliminating the integral part form original PIDEs, we deal with a semi-parabolic PDE. where modern finite difference methods are used to approximate the solution of this semi-parabolic PDE. It is worth noticing that these family of PDEs have a complex structure so that the standard numerical algorithms are not able to approximate their solutions. For details refer to 
\end{itemize}
Our proposed approach is to use a meshless collocation method based on RBFs of infinite or finite smoothness. Similar to Brummelhuis and Chan (see Section 3.3 of \cite{brummelhuis2014radial}), we show that smoothness properties  of radial basis functions such as MQ can overcome the singularity of a wide range of important L\'evy densities.
\subsection{Problem Discretization}
In this section, we describe a general numerical scheme based on RBF collocation to numerically solve the transformed linear partial integro-differential equation formulation (\ref{PIDE1}) 
which could be represented as
\begin{equation}
U_{\tau}={\cal L}U,
\end{equation}
in which 
\begin{equation}
U(\tau,x):=[u(\tau,x,1), u(\tau,x,2), \cdots, u(\tau,x,H)]^T.
\end{equation}
Many numerical schemes for time-dependent PDEs separate the discretization of time and space variables into distinct phases and develop the theory by assuming one discretization (outer discretization) to be carried out first leading to a so-called {\it semi-discrete problem}. After investigating the thus arising type of problem, one continues to perform the second discretization (inner discretization), ending up with a fully discrete scheme (see e.g. \cite{bornemann1989adaptive}).

We follow this tradition by first discretizing the equation (\ref{foreward-PIDE1}) in time and then using Newton's method for linearizing the time-discrete problem at the PDE level. Such a technique transforms the nonlinear stationary PDE at each time level into a sequence of linear PDEs which could now be solved using RBF collocation based on multiquadric (MQ) radial basis functions (see \cite{fasshauer2002newton} and \cite{bastani2018multilevel} for more details).
\subsection{Semi-Discretization in Time}
\label{sec:4.1}
Given an equally spaced time grid $0=\tau_0<\tau_1<...<\tau_{N} = T$ which subdivides the interval $[0,T]$ into $N$ sub-intervals of the form $[\tau_n, \tau_{n+1}],~n=0,1,2,\cdots,N-1$ with $\tau_n=n\Delta \tau$ and $\Delta \tau = T/N$, we first employ the temporal semi-discretization
$U(\tau_n,x)\approx U_n(x)$

\begin{equation}\label{NE}
\frac{{U_{n+1}(x) - U_n(x)}}{{\Delta \tau}} = \theta {\cal L}[{U}]\big|_{\tau=\tau_{n}}+(1 - \theta ){\cal L}[{U}]\big|_{\tau=\tau_{n+1}},
\end{equation}
to arrive at
\begin{equation}\label{rearranged}
[1 - \Delta \tau(1 - \theta ){\cal L}]{U}_{n+1}(x) =  [1 + \Delta \tau \theta {\cal L}]{U}_n(x),\quad n = 0,1,\cdots,N-1,
\end{equation}
in which $\theta \in [0,1]$ is the {\it implicitness level} of the scheme.

This last expression could be re-phrased as a linear elliptic partial differential equation of the form
\begin{eqnarray}\label{disct}
{\cal L}_{\Delta \tau}U_{n+1}(x) = {F}_{n},\quad n=0,\cdots,N-1,\label{T-PDE1}
\end{eqnarray}
with
\begin{equation}\label{EllipticAndRHS}
    {\cal L}_{\Delta \tau}:=1 - \Delta \tau(1 - \theta ){\cal L}, \quad
    { F}_{n}:=[1 + \Delta \tau\theta {\cal L}]U_{n}(x).
\end{equation}
\subsection{Meshfree RBF Collocation}
In order to give a rough idea of the radial basis function collocation, we first need the following concepts:
\begin{defn}\label{Radial}
A multivariate function $\Phi:{\mathbb R}^d\rightarrow {\mathbb R}$ is called \textsf{radially symmetric} or \textsf{isotropic} on ${\mathbb R}^d$, if $\Phi({\bf x})=\Phi({\bf y})$ whenever $\|{\bf x}\|=\|{\bf y}\|$. A radially symmetric function $\Phi(\cdot)$ could be represented as $\Phi(\bf{x})= \phi(\|\bf{x}\|)$, $\bf{x} \in$ ${\mathbb R}^d$ for some univariate function, $\phi: [0,\infty) \rightarrow \mathbb{R}$. Note that the norms will be the usual Euclidean 2-norm, defined by $\|{\bf x}\|_2:=\sqrt{{\bf x}^T{\bf x}}$.
\end{defn}

Before presenting the method, we first truncate the infinite spatial domain of the problem into a finite sub-domain of the form $I=[x_{min},x_{max}]$ and then consider the projection of $U(\tau,x)$ onto a finite dimensional space as a linear combination of radial basis function of the form
\begin{equation}\label{RBF2}
U_{n}(x)=\sum_{j=1}^{N_{x}} \Upsilon_{j}^{(n)} \phi\left(\left\|x-x_{j}\right\|\right) \equiv \sum_{j=1}^{N_{x}} \Upsilon_{j}^{(n)} \Phi_{j}(x),
\end{equation}
in which, $x_j$ for $j=1,\cdots, N_s$ is a set of scattered data points in the sub domain $I$ and $\Upsilon_{j}^{(n)}:=[\upsilon_{j1}^{(n)},\upsilon_{j2}^{(n)},\cdots,\upsilon_{jH}^{(n)}]^T$. Now, if we use $\theta = 0$ and Substitute (\ref{RBF2}) into (\ref{disct}), we obtain
\begin{equation}
\sum_{j=1}^{N_{x}} \Upsilon_{j}^{(n+1)} \Phi_{j}(x)-\Delta \tau \sum_{j=1}^{N_{x}}  \Upsilon_{j}^{(n+1)}\mathcal{L} \Phi_{j}(x)=U_{n}(x).
\end{equation}
So we have
\begin{equation}
    (\Phi-\Delta \tau\Phi_\mathcal{L})\Upsilon^{(n+1)}={\bf F}^{(n)}.
\end{equation}
where 
%
\begin{align}
     &\Phi=\left[\Phi_{j}(x_i)\right]_{i,j=1,\cdots,N_x},&\\
     &\Phi_\mathcal{L}=\left[\mathcal{L} \Phi_j(x_i)\right]_{i,j=1,\cdots,N_x},&\\
          &{\bf F}^{(n)} := [U_{n}(x_1),U_{n}(x_2),\cdots,U_{n}(x_{N_x})],&\\
     & \Upsilon^{(n+1)} := \left({\Upsilon_1^{(n+1)}}^T,{\Upsilon_2^{(n+1)}}^T, \ldots,{\Upsilon_{N_x}^{(n+1)}}^T \right)^T.&
 \end{align}
It is worth mentioning that mega matrix $\Phi_\mathcal{L}$ has a block structural where off-diagonal blocks entangles data from different regimes (see for more details \cite{bastani2013radial}).

\subsection{Some Basic Facts about Radial Basis Function Interpolation}\label{sec:4.2}
Let us recall some fundamental properties of RBFs before presenting the core theorem of this section.

\subsection{De-Singularization Procedure}
The first part of this subsection is to review some of the necessary concepts and theorems which we will apply to prove the main theorem of this article.
\begin{defn}\label{PDef} (Bochner \cite{bochner1955harmonic})
A radially symmetric multivariate function $\Phi:{\mathbb R}^d\rightarrow {\mathbb R}$ is called \textsf{positive definite}, if for any finite $k\in{\mathbb N}$ pairwise different points ${\bf x}_1,{\bf x}_2,\cdots,{\bf x}_k$ in ${\mathbb R}^d$ and for each ${\bf c}=[c_1,c_2,\cdots,c_k]^T\in{\mathbb R}^k$, we have
\begin{equation}\label{quadratic}
\sum_{i=1}^k\sum_{j=1}^kc_ic_j\Phi({\bf x}_i-{\bf x}_j)\geq 0.
\end{equation}
The function $\Phi$ is called \textsf{strictly positive definite} on ${\mathbb R}^d$, if the quadratic form in the left hand side of (\ref{quadratic}) is zero only for ${\bf c}={\bf 0}$. 
We also call a univariate function $\phi: [0,\infty) \rightarrow \mathbb{R}$ strictly positive definite on $\mathbb{R}^d$ (abbreviated as $\phi\in {\bf SPD}_d$), if the corresponding radially symmetric multivariate function defined by $\Phi(\bf{x}):= \phi(\|\bf{x}\|)$, $\bf{x} \in$ ${\mathbb R}^d$ is strictly positive definite.
\end{defn}
In order to prove our main result, we need the following properties of ${\bf SPD}_d$ functions which play a key role in our development. 
\begin{thm}\label{properties-SPD} (Fasshauer \cite{fasshauer2007meshfree})
Every ${\bf SPD}_d$ function, $\Phi(\cdot)$, satisfies the following properties:  
\begin{itemize}
    \item $\Phi(0) \geq 0$;
    \item $|\Phi(x)| \leq \Phi(0)$;
    \item $\Phi$ is a positive function.
\end{itemize}
\end{thm}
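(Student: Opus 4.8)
The plan is to read Definition~\ref{PDef} as the assertion that, for every finite choice of pairwise distinct nodes, the symmetric matrix $\bigl[\Phi(\mathbf{x}_i-\mathbf{x}_j)\bigr]_{i,j}$ is positive semi-definite; each listed property then drops out by evaluating the quadratic form \eqref{quadratic} on a judiciously chosen node set and coefficient vector. First I would establish $\Phi(\mathbf{0})\ge 0$ by taking the single node $\mathbf{x}_1$ with the scalar coefficient $c_1=1$: the left-hand side of \eqref{quadratic} collapses to $c_1^2\,\Phi(\mathbf{x}_1-\mathbf{x}_1)=\Phi(\mathbf{0})$, which is nonnegative by hypothesis.

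For the remaining two items I would use the two distinct nodes $\mathbf{x}_1=\mathbf{0}$ and $\mathbf{x}_2=\mathbf{x}$ (with $\mathbf{x}\neq\mathbf{0}$) and a generic real vector $\mathbf{c}=[c_1,c_2]^T$. Here the radial symmetry built into $\Phi$ enters decisively: since $\|\mathbf{x}\|=\|-\mathbf{x}\|$ we have $\Phi(\mathbf{x}_2-\mathbf{x}_1)=\Phi(\mathbf{x}_1-\mathbf{x}_2)=\Phi(\mathbf{x})$, so $\Phi$ is real-valued and even, which is the content of the third listed property and costs nothing. Substituting into \eqref{quadratic} then gives
\begin{equation*}
\Phi(\mathbf{0})\,(c_1^2+c_2^2)+2\,\Phi(\mathbf{x})\,c_1c_2\ \ge\ 0\qquad\text{for all }c_1,c_2\in\mathbb{R},
\end{equation*}
that is, the symmetric matrix $\begin{bmatrix}\Phi(\mathbf{0}) & \Phi(\mathbf{x})\\ \Phi(\mathbf{x}) & \Phi(\mathbf{0})\end{bmatrix}$ is positive semi-definite.

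The core of the argument, and the one step that is more than a substitution, is extracting the sharp bound from this $2\times 2$ matrix: positive semi-definiteness forces its leading principal minors to be nonnegative, namely $\Phi(\mathbf{0})\ge 0$ and $\det=\Phi(\mathbf{0})^2-\Phi(\mathbf{x})^2\ge 0$, and the latter is precisely $|\Phi(\mathbf{x})|\le\Phi(\mathbf{0})$. In the strictly positive-definite case the same computation with $\mathbf{c}\neq\mathbf{0}$ upgrades these to $\Phi(\mathbf{0})>0$ and $|\Phi(\mathbf{x})|<\Phi(\mathbf{0})$ for $\mathbf{x}\neq\mathbf{0}$. I expect the only real obstacle to be bookkeeping rather than depth: one must check that the chosen nodes are genuinely pairwise distinct so that Definition~\ref{PDef} applies, and that the two off-diagonal contributions are correctly merged using evenness; beyond that, everything reduces to the elementary fact that a real symmetric positive semi-definite matrix has nonnegative leading principal minors.
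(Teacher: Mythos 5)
The paper does not prove this theorem at all --- it is imported verbatim from Fasshauer with only a citation --- so there is no in-paper argument to compare against. Your proof of the first two bullets is the standard one and is correct: the single node with $c_1=1$ gives $\Phi(\mathbf{0})\ge 0$, and the two distinct nodes $\mathbf{0},\mathbf{x}$ produce a positive semi-definite $2\times 2$ Gram matrix with diagonal entries $\Phi(\mathbf{0})$ and off-diagonal entries $\Phi(\mathbf{x})$, whose nonnegative determinant yields $|\Phi(\mathbf{x})|\le\Phi(\mathbf{0})$. (Even more cheaply, testing \eqref{quadratic} with $\mathbf{c}=(1,\pm 1)^T$ gives $2\Phi(\mathbf{0})\pm 2\Phi(\mathbf{x})\ge 0$ directly, with no appeal to minors.) The symmetry $\Phi(\mathbf{x}_1-\mathbf{x}_2)=\Phi(\mathbf{x}_2-\mathbf{x}_1)$ you rely on is indeed guaranteed by the radial symmetry built into Definition~\ref{Radial}, and your observation that strictness upgrades both inequalities to strict ones for $\mathbf{x}\neq\mathbf{0}$ is also right.

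The one genuine issue is the third bullet. You silently replace ``$\Phi$ is a positive function'' by ``$\Phi$ is real-valued and even.'' Evenness does follow (trivially, from radiality), but pointwise positivity does \emph{not} follow from strict positive definiteness and is false in general: for $d=1$ the radial function $\Phi(x)=e^{-x^2}\cos(10x)$ is strictly positive definite (its Fourier transform is a sum of two shifted Gaussians, hence everywhere positive), yet $\Phi(\pi/10)<0$. Fasshauer's actual theorem lists evenness/real-valuedness rather than positivity, so your reading agrees with the cited source, but as the statement is printed here the third item is simply not provable; you should state explicitly that you are proving the corrected (evenness) version rather than presenting it as ``the content'' of the printed claim. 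This does not affect the rest of the paper, since the proof of Theorem~\ref{main_result1} only uses $\Phi(0)\ge 0$ and $|\Phi(x)|\le\Phi(0)$.
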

\begin{thm}\label{derivatives-SPD} (Buescu and Paix\!{a}o \cite{buescu2011differentiability} and Massa and et al. \cite{massa2017estimates})
Let $\Phi: \mathbb{R}^m \rightarrow \mathbb{R}$ be a positive definite function and suppose $\Phi$ is of class $C^{2n}$ (resp. $C^{\infty}$) in some neighborhood of the origin for some positive integer $n$. Then
\begin{itemize}
    \item $\Phi \in C^{2n}(\mathbb{R}^m)$ (resp. $C^{\infty}(\mathbb{R}^m))$,
    \item ${\left| {{{\rm{D}}^{\alpha  + \beta }}\Phi (x)} \right|^2} \le {( - 1)^{{m_1} + {m_2}}}{{\rm{D}}^{2\alpha }}\Phi (0){{\rm{D}}^{2\beta }}\Phi (0),\quad \forall x \in \mathbb{R}^m,\quad |\alpha|, |\beta| \leq n$,
    \item In the special case where $m=1$, we have $\Phi \in C^{2n}(\mathbb{R})$ and for all integers $m_1$, $m_2$ with $0 \leq m_i \leq n$, $i = 1, 2$ and every $x \in \mathbb{R}$ we have
\[
{\left| {{\Phi ^{({m_1} + {m_2})}}(x)} \right|^2} \le {( - 1)^{{m_1} + {m_2}}}{\Phi ^{(2{m_1})}}(0){\Phi ^{(2{m_2})}}(0).
\]
\end{itemize}
\end{thm}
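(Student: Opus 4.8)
The plan is to reduce everything to the Bochner integral representation of $\Phi$ and then extract the derivative bounds from a single application of the Cauchy--Schwarz inequality against the representing measure. Since $\Phi$ is positive definite on $\mathbb{R}^m$ (and continuous --- positive definiteness together with the regularity at the origin forces continuity everywhere), Bochner's theorem provides a finite non-negative Borel measure $\mu$ on $\mathbb{R}^m$ with
\[
\Phi(x)=\int_{\mathbb{R}^m} \mathrm{e}^{\mathrm{i}\, x^{T}\xi}\, \mathrm{d}\mu(\xi).
\]
Formally $\mathrm{D}^{\gamma}\Phi(x)=\int_{\mathbb{R}^m} (\mathrm{i}\xi)^{\gamma}\,\mathrm{e}^{\mathrm{i}\, x^{T}\xi}\,\mathrm{d}\mu(\xi)$ for a multi-index $\gamma$, provided the moment $\int |\xi^{\gamma}|\,\mathrm{d}\mu$ is finite, so the whole argument hinges on controlling the moments of $\mu$.

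First I would establish the smoothness claim by showing that $C^{2n}$-regularity of $\Phi$ near the origin forces $\int_{\mathbb{R}^m}\xi^{2\alpha}\,\mathrm{d}\mu(\xi)<\infty$ for every multi-index with $|\alpha|\le n$. Working one coordinate direction $e_k$ at a time, the classical device is the centered second difference: at $x=0$ one computes
\[
\Phi(h e_k)-2\Phi(0)+\Phi(-h e_k)=-4\int_{\mathbb{R}^m}\sin^2\!\Big(\tfrac{h\xi_k}{2}\Big)\,\mathrm{d}\mu(\xi),
\]
divides by $h^2$, and lets $h\to 0$. Existence of $\partial^2_{x_k}\Phi(0)$ makes the left-hand side converge, while Fatou's lemma applied to the non-negative integrand $\sin^2(h\xi_k/2)/(h/2)^2\to \xi_k^2$ yields $\int \xi_k^2\,\mathrm{d}\mu\le -\partial^2_{x_k}\Phi(0)<\infty$. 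Iterating the second difference $n$ times upgrades this to $\int \xi_k^{2n}\,\mathrm{d}\mu<\infty$ from the existence of $\partial^{2n}_{x_k}\Phi(0)$; since $\xi^{2\alpha}\le|\xi|^{2|\alpha|}\le 1+|\xi|^{2n}$ for $|\alpha|\le n$ and $|\xi|^{2n}$ is dominated by a constant times $\sum_k \xi_k^{2n}$, all moments of total order at most $2n$ are finite. Once these are finite, dominated convergence legitimizes differentiation under the integral sign at every $x\in\mathbb{R}^m$, so $\Phi\in C^{2n}(\mathbb{R}^m)$ (the $C^{\infty}$ statement being the limit $n\to\infty$). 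Simultaneously this identifies $\mathrm{D}^{2\alpha}\Phi(0)=(-1)^{|\alpha|}\int \xi^{2\alpha}\,\mathrm{d}\mu$, so $(-1)^{|\alpha|}\mathrm{D}^{2\alpha}\Phi(0)\ge 0$, which is exactly what makes the right-hand side of the asserted inequality non-negative.

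For the inequality itself I would split the integrand and apply Cauchy--Schwarz in $L^2(\mu)$. Writing
\[
\mathrm{D}^{\alpha+\beta}\Phi(x)=\mathrm{i}^{|\alpha|+|\beta|}\int_{\mathbb{R}^m} \big(\xi^{\alpha}\big)\big(\xi^{\beta}\,\mathrm{e}^{\mathrm{i}\, x^{T}\xi}\big)\,\mathrm{d}\mu(\xi),
\]
and using $|\xi^{\alpha}|^2=\xi^{2\alpha}$ and $|\xi^{\beta}\mathrm{e}^{\mathrm{i}\, x^{T}\xi}|^2=\xi^{2\beta}$, Cauchy--Schwarz gives
\[
\big|\mathrm{D}^{\alpha+\beta}\Phi(x)\big|^2\le \Big(\int \xi^{2\alpha}\,\mathrm{d}\mu\Big)\Big(\int \xi^{2\beta}\,\mathrm{d}\mu\Big)=(-1)^{|\alpha|+|\beta|}\,\mathrm{D}^{2\alpha}\Phi(0)\,\mathrm{D}^{2\beta}\Phi(0),
\]
which is precisely the claimed bound with $m_1=|\alpha|$ and $m_2=|\beta|$; the one-dimensional statement is the specialization $m=1$, $\alpha=m_1$, $\beta=m_2$.

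I expect the moment-finiteness step to be the genuine obstacle: converting local $C^{2n}$ regularity at a single point into a global integrability statement about $\mu$ is the only place where positive definiteness enters essentially --- through the sign of $2\cos\theta-2=-4\sin^2(\theta/2)$ and the one-sided control afforded by Fatou's lemma. Everything after the moments are known, namely global smoothness via differentiation under the integral and the derivative inequality via Cauchy--Schwarz, is routine.
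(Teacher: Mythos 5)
The paper does not prove this theorem at all: it is quoted verbatim as a known result of Buescu--Paix\~ao and Massa et al., with the proof delegated to those references. Your Bochner-based argument is the classical proof of exactly this statement and it is sound. The one step that genuinely carries weight is the moment-finiteness claim, and you have identified it correctly: the identity $\Phi(he_k)-2\Phi(0)+\Phi(-he_k)=-4\int\sin^2(h\xi_k/2)\,\mathrm{d}\mu(\xi)$ plus Fatou gives $\int\xi_k^2\,\mathrm{d}\mu\le-\partial^2_{x_k}\Phi(0)$, and the upgrade to order $2n$ works either by a single $2n$-th order centered difference (whose symbol is $(2\mathrm{i}\sin(h\xi_k/2))^{2n}h^{-2n}\to(-1)^n\xi_k^{2n}$) or, more cleanly, by induction on the observation that $-\partial^2_{x_k}\Phi$ is again positive definite with representing measure $\xi_k^2\,\mathrm{d}\mu$ and is $C^{2n-2}$ near the origin; your phrase ``iterating the second difference'' should be pinned down to one of these two. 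After that, $|\xi^{\gamma}|\le 1+|\xi|^{2n}\le 1+m^{\,n-1}\sum_k\xi_k^{2n}$ justifies differentiation under the integral for all $|\gamma|\le 2n$, and Cauchy--Schwarz in $L^2(\mu)$ delivers the stated inequality with $m_1=|\alpha|$, $m_2=|\beta|$, including the sign identity $(-1)^{|\alpha|}\mathrm{D}^{2\alpha}\Phi(0)=\int\xi^{2\alpha}\,\mathrm{d}\mu\ge0$ that makes the right-hand side meaningful. Two small points worth recording if you write this up: continuity of $\Phi$ everywhere (needed to invoke Bochner) follows from continuity at $0$ via $|\Phi(x)-\Phi(y)|^2\le2\Phi(0)\left(\Phi(0)-\Phi(x-y)\right)$, itself a consequence of positive definiteness on three points; and the paper's Definition of positive definiteness uses only real coefficients, so you should note that for continuous real-valued even (in particular radial) functions this coincides with the complex-coefficient notion required by Bochner's theorem. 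For comparison, the cited source of Buescu--Paix\~ao derives the same inequalities without Bochner, directly from positive semi-definiteness of small matrices built from difference quotients of the kernel, which generalizes to non-translation-invariant positive definite kernels; your route is shorter but tied to the Fourier representation.
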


 
\begin{rem}\label{points}
Based on the above results, we have the following:
\begin{itemize}
    \item[{\bf (i)}] Every ${\bf SPD}_d$ function, $\Phi$, is bounded above (see the Theorem \ref{properties-SPD}).
    \item[{\bf (ii)}] Based on above assumptions, $\Phi \in {\bf SPD}_d \cap C^{2n}(\mathbb{R})$ for $n\geq 2$ ($\Phi$ is $2n$-times differentiability on $\mathbb{R}$).
    \item[{\bf (iii)}] Based on the Theorem \ref{derivatives-SPD}, if we define ${M_k}: = \mathop {\max}\limits_{x \in \mathbb{R}} \{ | {{\Phi ^{(k)}}(x)} |\}$ for $0 \leq k \leq 2n$, then $M_k < \infty$.
\end{itemize}
\end{rem}

\begin{thm}\label{main_result1}
Let $\Phi : \mathbb{R} \rightarrow \mathbb{R}$ be a real-valued strictly positive definite function and assume that $\Phi$ is of class $C^{2n}$ in some neighborhood of the origin for some positive integer $n \geq 2$. Then, we have $\Phi \in  C^{2n}(\mathbb{R})$. Also for any tempered stable L\'{e}vy measure $\nu(i,\mathrm{d} z)$ with $\alpha^j_{\pm} < 2$ defined in \ref{density_GTS}, the integral 
\begin{equation}
I_v(x) := \int_{\mathbb{R} } \left( \Phi(x+z)-\Phi(x)-z \mathbbm{1}_{|z| \leq 1} \frac{\partial}{\partial x} \Phi(x)\right) \nu(j, \mathrm{d} z),
\end{equation}
is finite for all $x \in \mathbb{R}$ and without singularity at the origin. 
\end{thm}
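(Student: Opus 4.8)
The plan is to establish the two assertions in turn. The global regularity $\Phi \in C^{2n}(\mathbb{R})$ is immediate from Theorem \ref{derivatives-SPD}: a positive definite function that is $C^{2n}$ in a neighborhood of the origin is automatically $C^{2n}$ on all of $\mathbb{R}$. In particular, since $n \geq 2$ we get $\Phi \in C^4(\mathbb{R})$, and by Remark \ref{points}(iii) every derivative bound $M_k := \max_{x \in \mathbb{R}} |\Phi^{(k)}(x)|$, $0 \leq k \leq 2n$, is finite. This \emph{global} boundedness of the derivatives, inherited from strict positive definiteness via Theorem \ref{derivatives-SPD}, is the key feature I would exploit. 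I would also record at the outset that, since $w$ is decreasing with $w(0^+)=1$, we have $w(\beta^j_{\pm}|z|) \leq 1$, so the tempering factor only bounds the density from above and never worsens the singularity.

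To estimate $I_v(x)$ I would split the domain as $\mathbb{R} = \{|z| \leq 1\} \cup \{|z| > 1\}$. On the tail $\{|z| > 1\}$ the indicator $\mathbbm{1}_{|z| \leq 1}$ vanishes, so the integrand is just $\Phi(x+z) - \Phi(x)$; by boundedness of $\Phi$ (Theorem \ref{properties-SPD} / Remark \ref{points}(i)) it is dominated by $2M_0$, and finiteness of this piece follows from $\int_{|z|>1} \nu(j,\mathrm{d}z) < \infty$, which is exactly the case $n=0$ of Remark \ref{finite_density} (relation \eqref{intb}).

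The crux is the near-origin piece $\{|z| \leq 1\}$, where the density blows up like $|z|^{-1-\alpha^j_{\pm}}$. Here I would invoke the second-order Taylor expansion, valid because $\Phi \in C^2(\mathbb{R})$,
\[
\Phi(x+z) - \Phi(x) - z\,\Phi'(x) = \tfrac{1}{2} z^2 \Phi''(\xi_{x,z}),
\]
for some $\xi_{x,z}$ between $x$ and $x+z$, so that the compensated integrand is bounded in absolute value by $\tfrac{1}{2} M_2 z^2$ \emph{uniformly in} $x$. It then remains to verify $\int_{|z| \leq 1} z^2\, \nu(j,\mathrm{d}z) < \infty$; using $w \leq 1$ this is controlled by $C^j_+ \int_0^1 z^{1-\alpha^j_+}\,\mathrm{d}z + C^j_- \int_0^1 z^{1-\alpha^j_-}\,\mathrm{d}z$, and each integral converges precisely because $1 - \alpha^j_{\pm} > -1$, i.e. because $\alpha^j_{\pm} < 2$. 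This is exactly where the hypothesis enters and where the compensator $-z\mathbbm{1}_{|z|\leq 1}\Phi'(x)$ earns its place: cancelling the first-order term leaves an $O(z^2)$ integrand that absorbs the $|z|^{-1-\alpha^j_{\pm}}$ singularity of the measure across the whole admissible range.

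Combining the two pieces gives finiteness of $I_v(x)$ for every $x \in \mathbb{R}$, and the $O(z^2)$ control near $z=0$ is precisely the assertion that the integrand is integrable at the origin, i.e. that the operator is de-singularized. The main obstacle — and the conceptual heart of the argument — is securing the uniform bound $M_2 < \infty$ on $\Phi''$ from strict positive definiteness through Theorem \ref{derivatives-SPD}; once that global estimate is in hand, the near-origin analysis reduces to the routine comparison against $\int_0^1 z^{1-\alpha}\,\mathrm{d}z$ that succeeds for all $\alpha^j_{\pm} < 2$.
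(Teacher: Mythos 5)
Your proof is correct and follows essentially the same route as the paper's: split off the tail $|z|>1$ (handled by the uniform bound $|\Phi|\leq\Phi(0)$ together with $\int_{|z|>1}\nu(j,\mathrm{d}z)<\infty$) and control the compensated integrand near $z=0$ by an $O(z^2)$ Taylor bound, using the global derivative estimates that Theorem \ref{derivatives-SPD} extracts from positive definiteness, so that integrability reduces to $\int_{|z|\leq 1}z^{2}\nu(j,\mathrm{d}z)<\infty$, i.e.\ to $\alpha^j_{\pm}<2$. The only difference is cosmetic: you use the Lagrange form of the remainder at second order, which needs only $M_2$, whereas the paper expands one order further and also invokes $M_3$.
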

\begin{proof}
For arbitrary constants $0 < \varepsilon < 1$ and $b \gg 1$, the improper integral $I_v(\cdot)$ could be written as
\[
I_v =  \mathop {\lim }\limits_{b \to \infty }\left( I^{(1)}_v + I^{(4)}_v \right) + \mathop {\lim }\limits_{\varepsilon \to 0^+ }\left( I^{(2)}_v + I^{(3)}_v \right),
\]
where 
\begin{align*}
  I^{(1)}_v &= \int_{-b}^{-1} \left( \Phi(x+z)-\Phi(x)\right) \frac{{{C^j_ - }w({\beta^j _ - }\left| z \right|)}}{{{{\left| z \right|}^{1 + {\alpha^j _ - }}}}} \mathrm{d} z,&\\
  I^{(2)}_v&= \int_{-1}^{-\varepsilon} \ \left( \Phi(x+z)-\Phi(x)-z \mathbbm{1}_{|z| \leq 1} \frac{\partial}{\partial x} \Phi(x)\right) \frac{{{C^j_ - }w({\beta^j _ - }\left| z \right|)}}{{{{\left| z \right|}^{1 + {\alpha^j _ - }}}}} \mathrm{d} z,&\\
  I^{(3)}_v & = \int_{\varepsilon}^{1} \ \left( \Phi(x+z)-\Phi(x)-z \mathbbm{1}_{|z| \leq 1} \frac{\partial}{\partial x} \Phi(x)\right) \frac{{{C^j_ + }w({\beta^j _ + } z )}}{{{z^{{1} + {\alpha^j _ + }}}}} \mathrm{d} z,&\\  
  I^{(4)}_v& = \int_{1}^{b} \left( \Phi(x+z)-\Phi(x)\right) \frac{{{C^j_ + }w({\beta^j _ + }z)}}{{{z^{{1} + {\alpha^j _ + }}}}} \mathrm{d} z.& 
\end{align*}
According to the comparison test for improper integrals (see Chapter 12 in \cite{wrede2002schaum}), it is easy to show that the improper integrals of the first kind (i.e. $I_v^{(1)}$ and $I_v^{(4)}$) are absolutely convergent as $b$ tends to infinity for arbitrary $\alpha^j_{\pm} < 2$, because we can obtain the following upper bounds for the integrals $I^{(1)}_v$ and $I^{(4)}_v$ using Theorem \ref{properties-SPD} and Definition \ref{definition_tempered_function}:
\begin{align*}
 \left| I^{(1)}_v \right|& \leq  \int_{-b}^{-1} \left|\left( \Phi(x+z)-\Phi(x)\right) \frac{{{C^j_ - }w(-{\beta^j _ - } z)}}{{{{ (-z) }^{1 + {\alpha^j _ - }}}}}\right| \mathrm{d} z&\\
 &\leq  2 \Phi(0^+) \int_{-b}^{-1} \frac{C^j_- w(-{\beta^j _ - } z)}{{{{(- z) }^{1 + {\alpha^j _ - }}}}} \mathrm{d} z =  \Phi(0^+) \int_{b>|z|\geq 1} \nu_{\rm sym}(j,\mathrm{d} z),&
\end{align*}
in which $\nu_{\rm sym}(j,\mathrm{d} z)$ is the ``symmetrized'' version of the original L\'evy measure. Moreover, we have 
\begin{align*}
 \left| I^{(4)}_v \right|& \leq  \int_{1}^{b} \left|\left( \Phi(x+z)-\Phi(x)\right) \frac{{{C^j_ + }w({\beta^j _ + } z)}}{{{{ z }^{1 + {\alpha^j _ + }}}}} \right|\mathrm{d} z \leq  \Phi(0^+) \int_{1 \leq |z| < b} \nu_{\rm sym}(j,\mathrm{d} z).&
\end{align*}
According to Remark \ref{finite_density}, the right hand sides of the above two inequalities are absolutely convergent as $b \rightarrow \infty$. Note also that the symmetric L\'evy measure $\nu_{\rm sym}(j,\mathrm{d} z)$ is given by substituting  $C^j_+ = C^j_-$, $\alpha^j_+ = \alpha^j_-$ and $\beta^j_+=\beta^j_{-}$ in (\ref{density_GTS}). The proof will be complete if we show that the integrals $I^{(2)}_v$ and $I^{(3)}_v$ are also absolutely convergent. In this respect, we will simultaneously prove them for the case $\alpha_{\pm} < 2$, which consists of the following three different subcases: 
\begin{itemize}
    \item[(1)] Finite activity tempered stable L\'evy measures ($\alpha_{\pm} < 0$),
    \item[(2)] Infinite activity but finite variation tempered stable L\'evy measures ($0 \leq \alpha_{\pm} < 1$),
    \item[(3)] Infinite activity and infinite variation tempered stable L\'evy measures ($1 \leq \alpha_{\pm} < 2$).
\end{itemize}
Based on Definition \ref{definition_tempered_function} and also by applying Theorem \ref{properties-SPD}, Theorem \ref{derivatives-SPD}, Lemma \ref{points} and also Taylor's theorem, the following inequalities are obtained for the integrals $I^{(2)}_v$ and $I^{(3)}_v$:
\begin{align*}
 \left| I^{(2)}_v \right|& \leq  \int^{-\varepsilon}_{-1} \left|\left( \Phi(x+z)-\Phi(x)-z \frac{\partial}{\partial x}\Phi\right) \frac{{{C^j_ - }w({-\beta^j _ - } z )}}{{{{ (-z) }^{1 + {\alpha^j _ - }}}}} \right|\mathrm{d} z &\\
  &=  \int^{-\varepsilon}_{-1}\left| \left( \frac{z^2}{2!}\frac{\partial^2 }{\partial x^2}\Phi(x) + R_2(z)\right) \frac{{{C^j_ - }w({-\beta^j _ - } z )}}{{{{ (-z) }^{1 + {\alpha^j _ - }}}}} \right| \mathrm{d} z&\\
  &\leq   \frac{M_2}{2!} \int^{-\varepsilon}_{-1}  z^2  \frac{{{C^j_ - }w({-\beta^j _ - } z )}}{{{{ (-z) }^{1 + {\alpha^j _ - }}}}} \mathrm{d} z +  \frac{M_3}{3!} \int^{-\varepsilon}_{-1} |z|^3 \frac{{{C^j_ - }w({-\beta^j _ - } z )}}{{{{ (-z) }^{1 + {\alpha^j _ - }}}}}  \mathrm{d} z&\\
    &\leq   \frac{M_2}{4} \int_{\varepsilon \leq |z| < 1}  z^2   \nu_{\rm sym}(j,\mathrm{d} z) +  \frac{M_3}{12} \int_{\varepsilon \leq |z| < 1} |z|^3 \nu_{\rm sym}(j,\mathrm{d} z)&\\
    &\leq   \frac{1}{4} \max\{M_2,M_3\} \int_{\varepsilon \leq |z| < 1}  z^2   \nu_{\rm sym}(j,\mathrm{d} z), &   
\end{align*}
and
\begin{align*}
 \left| I^{(3)}_v \right|& \leq  \int^{1}_{\varepsilon} \left|\left( \Phi(x+z)-\Phi(x)-z \frac{\partial}{\partial x}\Phi\right) \frac{{{C^j_ + }w({\beta^j _ + } z )}}{{{{ z }^{1 + {\alpha^j _ + }}}}} \right|\mathrm{d} z &\\
  &=  \int^{1}_{\varepsilon}\left| \left( \frac{z^2}{2!}\frac{\partial^2 }{\partial x^2}\Phi(x) + R_2(z)\right) \frac{{{C^j_ + }w({\beta^j _ + } z )}}{{{{ z }^{1 + {\alpha^j _ + }}}}} \right|\mathrm{d} z&\\
    &\leq   \frac{M_2}{2!} \int_{\varepsilon}^{1}  z^2  \frac{{{C^j_ + }w({\beta^j _ + } z )}}{{{{ z }^{1 + {\alpha^j _ + }}}}} \mathrm{d} z +  \frac{M_3}{3!} \int_{\varepsilon}^{1} |z|^3 \frac{{{C^j_ + }w({\beta^j _ + } z )}}{{{{ z }^{1 + {\alpha^j _ + }}}}}  \mathrm{d} z&\\
    &\leq   \frac{M_2}{4} \int_{\varepsilon \leq |z| < 1}  z^2   \nu_{\rm sym}(j,\mathrm{d} z) +  \frac{M_3}{12} \int_{\varepsilon \leq |z| < 1} |z|^3 \nu_{\rm sym}(j,\mathrm{d} z)&\\
    &\leq   \frac{1}{4} \max\{M_2,M_3\} \int_{\varepsilon \leq |z| < 1}  z^2   \nu_{\rm sym}(j,\mathrm{d} z),& 
\end{align*}
in which $R_k(z)$ is the mean-value form of the remainder term in the Taylor's theorem and given by:
\begin{equation}\label{taylor-reminder}
  R_k(z)=\frac{f^{(k+1)}(\xi_x)}{(k+1)!}z^{k+1},  
\end{equation}
for some real number $\xi_x$ between $x$ and $x+z$. 

By the comparison test for improper integrals and according to the definition of a L\'evy measure (see Subsection \ref{subsection_Switching}), the integrals $I_v^{(2)}$ and $I_v^{(3)}$ are absolutely convergent due to the existence of upper integral bounds  as $\varepsilon \rightarrow 0^+$. Note that 
$|R_k(z)| \leq \frac{||f^{(k+1)}(\cdot)||}{(k+1)!}|z|^{k+1}$ and so the proof is complete.
\end{proof}

Theorem \ref{main_result1} shows that collocation based on infinitely differentiable radial basis functions such as Gaussian bases is a reliable method to approximate the solution of PIDEs arising from models including tempered stable L\'{e}vy  processes. I the next section, we assess this in practice for different standard test problems.  
\section{Numerical Experiments}

In this section, we use numerical experiments to showcase the use of the proposed meshfree methodology to find the default probability of firms whose asset price model is a Markov-modulated L\'{e}vy process with synchronous jumps.
Uniform distribution of points in interval $[x_{\rm min}, x_{\rm max}] = [-8,8]$ is used for the collocation procedure \cite{bastani2013radial}. Although, we analyze the methodology with different families of RBFs, we choose to report the results based on Gaussian RBFs due to the fact that the solution of a PIDEs based on the jump-diffusion model inherently behaves like a Gaussian distribution (see \cite{kazemi2018new} for more details). The first test problem is pertained to three French firms calibrated and fitted by Markov modulated L\'evy processes with synchronous jumps in \cite{Hainaut} for a period of 10 years from the first of January 2004 to the twenty forth of January 2014, whereas the two last test problems are designed by authors for CGMY L\'evy process and Kobol l\'evy process that are analyzed with  the proposed method in this paper.\\
\\
{\bf Test Problem I (VG Model)}
In the first experiment, our computations are based on parameters obtained in \cite{Hainaut} for three French companies: Axa, STMicroelectonics and soci\'et\'e G\'en\'erale  with the fitted parameters reported in Table \ref{dataExI}. Using econometric adjustment in \cite{Hainaut}, a regime switching process with two regimes ($H=2$) based on the VG process with synchronous jumps is used to model the dynamics of the firms's asset value process.

\begin{table}[ht]
\centering
\caption{Parameters of the VG model for  Axa, STMicroelectonics and Soci\'{e}t\'{e} G\'{e}n\'{e}rale firms.}
\label{dataExI}
     \begin{tabular}{ccccccccc}
\topline 
         %
     VG Parameters &  \multicolumn{2}{c}{Axa} & & \multicolumn{2}{c}{STM} & & \multicolumn{2}{c}{Soc Gen}  \\
\midtopline
                   & State 1 & State 2 & &State 1 & State 2 & & State 1 & State 2\\ 
\cline{2-3}   \cline{5-6}  \cline{8-9} 
            $\sigma_j$ & 0.4460 & 0.1234 & &0.2495 & 0.1534 & &0.3227 & 0.1675 \\
            $\theta_j$ & -0.1421 & 0.0196 & &-0.1135 & 0.0043 && -0.1576 & 0.0254\\
            $\kappa_j$ & 0.0236 & 0.0011 & & 0.0374 & 0.0015 & & 0.0306 & 0.0028\\
            $p_j$ & 0.8755 & 0.9664 & & 0.6822 & 0.9386 & & 0.8083 & 0.9549\\
            $\eta_{i,j}$ & 0.0160 & -0.0092 & & 0.0078 & -0.0074 & & 0.0132 & -0.0117\\
\bottomline
\end{tabular}
\end{table}


For this problem, the probability of default surfaces for the STMicrolectronics firm are illustrated in two different regimes in Figure \ref{surfSTEM}. Besides, we compare the results of meshfree collocation method of this paper with the result of FFT method proposed by Hainaut et al. in \cite{Hainaut} in Table \ref{errorRBFthreeFirms}. It can be seen that we obtain better results as the number of spatial nodes increases. Nevertheless the computational time will naturally increase as the size of the system becomes larger. The probability of bankruptcy for the three companies is depicted in Figure \ref{PDthreeFirms} for the time horizon from 1 to 10 years, where firms can default at the end of each financial year.

\begin{figure}[!ht]
\centering

 \includegraphics[width=10cm]{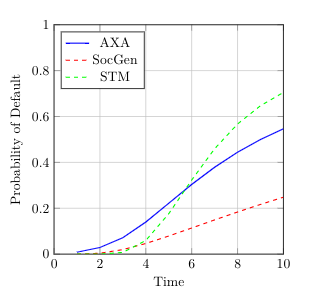}
\caption{}
\caption{Default Probability, VG models}
\label{PDthreeFirms}
\end{figure}


\begin{table}[ht]
        \centering
        \caption{The error incurred in estimating the probability of defaulat for Soci\'{e}t\'{e} G\'{e}n\'{e}rale.}
        \label{errorRBFthreeFirms}
      \begin{tabular}{c|c|c|c} \hline
           $N_s$ & Relative Error in State 1 & Relative Error in State 2&  CPU-Time (s)\\
           \hline
              64    &  0.638564   & 0.67354  & 0.814\\
             128    &  0.012792   &0.01321  & 0.955\\
              256   &  0.053485   & 0.05090  & 1.812\\
              512   &  0.001012   & 0.00103  & 2.336\\
         \hline
        
        \end{tabular}
\end{table}

\begin{figure}[ht]

\begin{subfigure}{0.5\textwidth}

 \includegraphics[width=7.5cm]{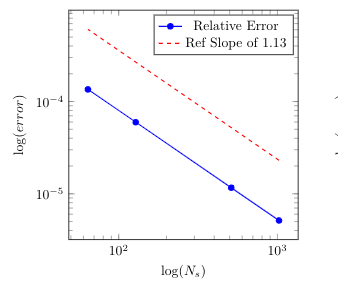}
\caption{}
\end{subfigure}
\begin{subfigure}{0.5\textwidth}

\includegraphics[width=7.5cm]{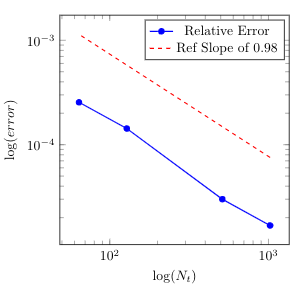}
\caption{}
\end{subfigure}

\caption{Accuracy properties of numerical method for Axa with VG models, a) Impact of space steps b) Impact of time steps}
\label{orderAxa}
\end{figure}


\begin{figure}[ht]

\begin{subfigure}{0.5\textwidth}

\includegraphics[width=7.5cm]{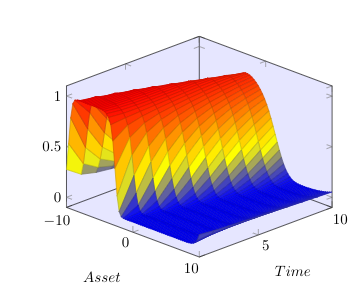}

\caption{}
\label{fig_PD_STEM_S1}
\end{subfigure}
\begin{subfigure}{0.5\textwidth}

\includegraphics[width=7.5cm]{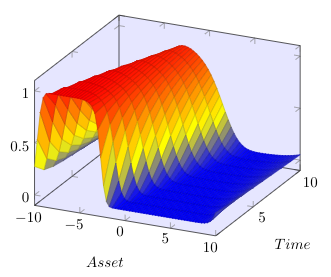}

\caption{}
\label{fig_PD_STEM_S2}
\end{subfigure}

\caption{Probability of default for STEM: (a) the first regime (b) the second regime.}
\label{surfSTEM}
\end{figure}

\begin{figure}[!ht]
\centering

%

\includegraphics[width=7.5cm]{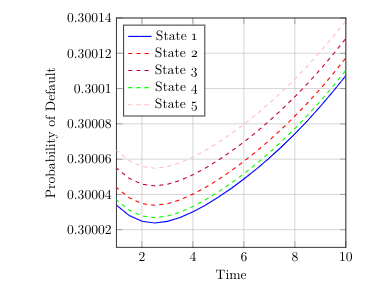}

\caption{Default Probability for the CGMY model.}
\label{PDCGMY}
\end{figure}



\begin{figure}[ht]
\centering

\includegraphics[width=10cm]{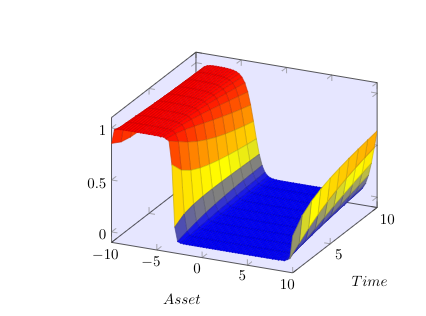}

\caption{Probability of default based on the CGMY Process for state 1.}
\label{fig_CGMY_S1}
\end{figure}




In order to study the numerical error of the meshfree RBF collocation method for evaluating the probability of default with respect to discretization parameters of $\Delta x$ and $\Delta \tau$, we calculate the log relative error obtained by 
\begin{equation*}
    \|E\|_{\text{LRE}} = \log\Big|\frac{\text{PD}_{\text{RBF}}-\text{PD}_{\text{FFT}}}{\text{PD}_{\text{FFT}}}\Big|
\end{equation*}
In Figure \ref{orderAxa}, we use this error metric to assess the rate of convergence of our RBF collocation method for evaluating the probability of default for the Axa case. It is evident from this figure that the rate of convergence of the numerical scheme is almost linear in time and super-linear in space. In general, it seems that the integral term related to the synchronous jump term (\ref{operator1}) the accuracy, and deteriorates the rate of convergence in time and specially space. 

Based on the fact that there is no real world example of calibrating the asset of a firm in tempered stable L\'evy model such as CGMY and Kobol models in the form of a regime-switching synchronous jump L\'evy process, so we have designed two artificial test cases based on these two processes similar to the example designed in \cite{bastani2013radial}.\\
\\
{\bf Test Problem II (CGMY Model)} 
For this example, we assume that the value of a hypothetical firm follows a CGMY L\'evy process in the context of regime-switching synchronise-jump dynamics containing 5 different regimes. Let us also assume that the  following generator matrix, $Q$, for the transition probability matrix is given by
\begin{equation}
Q = \begin{bmatrix}
-1 & 0.25 & 0.25 & 0.25 & 0.25\\
0.25 & -1 & 0.25 & 0.25 & 0.25\\
0.25 & 0.25 & -1 & 0.25 & 0.25\\
0.25 & 0.25 & 0.25 & -1 & 0.25\\
0.25 & 0.25 & 0.25 & 0.25 & -1\\
\end{bmatrix} 
\end{equation}
Data related to this model for five different states is provided in Table \ref{DataCGMY}.
\begin{table}[ht]
        \centering
        \caption{Parameters of the CGMY model.}
        \label{DataCGMY}
         \begin{tabular}{c|c|c|c|c|c} \hline
           CGMY Parameters & State 1 & State 2 & State 3 & State 4 & State 5 \\
           \hline
             $C$    &  0.1   & 0.3  & 0.5 & 0.7 & 0.9\\
             $G$    &  2.0  & 4.0 & 6.0 & 8.0 & 10.0  \\
             $M$   &  1.0  & 3.0 & 5.0 & 7.0 & 9.0 \\
             $Y$   &  0.11  & 0.22 & 0.33 & 0.44 & 0.55\\
             $\eta_{i,j}$ &0.01 & -0.02 & 0.03 & -0.04 & 0.05 \\
         
        \hline
         \end{tabular}
\end{table}
We have used a computational domain of the form, $[x_{\rm min}, x_{\rm max}] = [-10,10]$ for our collocation  method and also for truncating the integration domain. The approximated probability of default for this case and for time periods from 1 to 10 years is depicted in Figure (\ref{PDCGMY}). It must be noted that besides the Gaussian basis function, the cubic radial basis function, $\phi(r)=r^3$, works perfectly well and produces reliable and stable solutions. Figure (\ref{fig_CGMY_S1}) portraits the portability of default surface in this problem for the first regime. \\
\\
{\bf Test Problem III (KoBoL Model)} 
This test case is dedicated to finding the probability of default, when the dynamics of the firm's asset follows a regime-switching L\'evy process with three different regimes based on KoBoL jump process and  synchronous jumps effect. The generator matrix, $Q$ of the transition probability matrix is given by 
\begin{equation}
Q = \begin{bmatrix}
-1 & 0.3 & 0.7  \\
0.3 & -1 & 0.7 \\
0.7 & 0.3 & -1 
\end{bmatrix}, 
\end{equation}
with model parameters provided in Table (\ref{DataKobol}).
\begin{table}[ht]
        \centering
        \caption{Parameters of the KoBol model.}
        \label{DataKobol}
      \begin{tabular}{c|c|c|c} \hline %
           KoBol Parameters & State 1 & State 2 & State 3 \\
           \hline
             $C$    &  0.1  & 0.11  & 0.13\\
             $Y$    &  0.9  & 1.2   & 1.8\\
             $P$    &  0.2  & 0.4   & 0.8\\
             $q$    &  0.3  & 0.5   & 0.7\\
             $\lambda$&3.0  & 2     & 2.5\\ 
             $\eta_{i,j}$ &0.04 & -0.01 & 0.02 \\
             \hline
        \end{tabular}
\end{table}
Computational domain is $[x_{\rm min}, x_{\rm max}] = [-10,10]$ for collocation points. The probability of default of the firm for three different regimes is illustrated in Figure (\ref{KObol3State}) in the time horizon from 1 to 10 years when the default is happening just at the maturity. 
\begin{figure}[!ht]
\centering
%

\includegraphics[width=10cm]{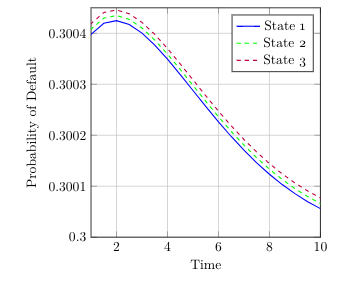}

\caption{Default probability for KoBol model.}
\label{KObol3State}
\end{figure}
Moreover, Figure (\ref{kobolSurfaces}) depicts the  probability of default surface in this case with three different regimes with the KoBol jump model over the space-time domain of $S\times t\in[e^{-10},e^{10}]\times [0,10]$. 


\begin{figure}[ht]

\begin{subfigure}{0.5\textwidth}

\includegraphics[width=7.5cm]{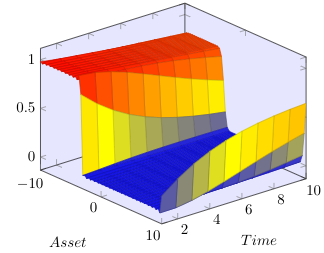}

\caption{}
\label{fig_STEM_S1}
\end{subfigure}
\begin{subfigure}{0.5\textwidth}

\includegraphics[width=7.5cm]{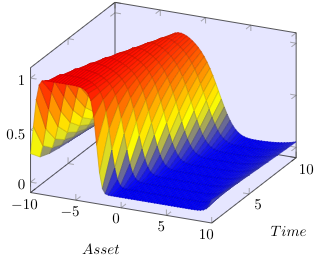}

\caption{}
\label{fig_STEM_S2}
\end{subfigure}

\centering
\begin{subfigure}{0.5\textwidth}

\includegraphics[width=7.5cm]{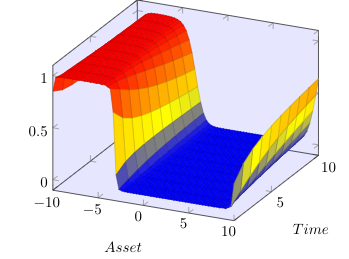}

\caption{}
\label{fig_STEM_S3}
\end{subfigure}
\caption{Probability of default in KoBol Model: (a) First regime; (b) Second regime; (c) Third regime}.
\label{kobolSurfaces}
\end{figure}


The stable behavior of these surfaces confirm that the proposed method is reliable and useful for problems with tempered stable jump terms as well as problems with more economic regimes.
\section{Conclusion}
In this investigation, a radial basis function collocation method is proposed to evaluate the default probability of a firm where the value of the firm is satisfying the newly proposed synchronous-jump regime-switching model. We explained the detailed methodology of the proposed method including deriving a semi-discrete system from discretizing the time derivative with finite-differences and meshfree collocation on the spatial direction. Generalizing the proposed synchronous-jump regime-switching model to benefit from tempered stable processes, we theoretically proved that the proposed numerical method is stable and inherently de-singularizes the problem. The numerical experiments confirm the efficiency and accuracy of the method. This method can be utilized for other structural models based on the synchronous-jump regime-switching model both for pricing financial derivatives and also credit risk computations. 

\FloatBarrier
\bibliography{main}
\end{document}